\documentclass[a4paper,12pt,reqno]{amsart}

 \usepackage[T2A]{fontenc}
 \usepackage[utf8]{inputenc}
 \usepackage[ukrainian,english]{babel}
 \usepackage{amssymb,upref}
\usepackage{amsthm}

\usepackage{graphicx}
\usepackage{mathrsfs}

\usepackage[text={130mm,200mm}]{geometry}

\theoremstyle{plain}
\newtheorem{theorem}{Theorem}
\newtheorem*{theorem*}{Theorem}

\newtheorem*{corollary*}{Corollary}
\newtheorem{lemma}{Lemma}
\newtheorem*{lemma*}{Lemma}

\newtheorem*{proposition*}{Proposition}

\newtheorem*{conjecture*}{Conjecture}
\theoremstyle{definition}
\newtheorem{definition}{Definition}
\newtheorem*{definition*}{Definition}
\theoremstyle{remark}

\newtheorem*{remark*}{Remark}

\begin{document}

\title[Transformations and functions that preserve the asymptotic mean of digits]{Transformations and functions that preserve the asymptotic mean of digits in the ternary representation of a number}

\author{M. V. Pratsiovytyi}
\address[M. V. Pratsiovytyi]{Institute of Mathematics of NAS of Ukraine,  Dragomanov Ukrainian State University, Kyiv, Ukraine\\
ORCID 0000-0001-6130-9413}
\email{prats4444@gmail.com}
\author{S. O. Klymchuk}
\address[S. O. Klymchuk]{Institute of Mathematics of NAS of Ukrain, Kyiv, Ukraine\\
ORCID 0009-0005-3979-4543}
\email{svetaklymchuk@imath.kiev.ua}

\author{O. P. Makarchuk}
\address[O. P. Makarchuk]{Institute of Mathematics of NAS of Ukrain, Kyiv, Ukraine\\
ORCID 0000-0002-1001-8568}
\email{makarchuk@imath.kiev.ua}

\subjclass{11K50, 26A27, 26A30}

\keywords{$s$--adic representation of a real number; digit frequency; asymptotic mean of digits; function that preserves the asymptotic mean of digits; transformation that preserves digit frequencies.}

\thanks{Scientific Journal of Drahomanov National Pedagogical University. Series 1. Physical and Mathematical Sciences. -- Kyiv:Drahomanov National Pedagogical University, 2013, No. 15, pp. 87--99.}

\begin{abstract}
In the paper we study transformations of the interval $[0;1)$ and functions that preserve the asymptotic mean $r$ of the digits in the $s$--adic representation of a number $x$,
$$r(x)=\lim\limits_{n\to\infty}\frac{1}{n}\sum\limits^{n}_{i=1}\alpha_i(x)$$
specify the necessary and sufficient conditions for a transformation to belong to this class

\end{abstract}

\maketitle
\section{Introduction}
When we consider the fractional part of a real number, that is, points of the interval $[0;1)$ we henceforth restrict ourselves to numbers $x\in[0;1)$.

Let $2\leqslant s$ denote a fixed natural number, $\mathcal{A}_s\equiv\{0,1,\ldots,s-1\}$ denote the alphabet of the $s$--adic representation and $L\equiv\mathcal{A}_s\times \mathcal{A}_s\times\ldots\times \mathcal{A}_s\ldots$ denote the space of sequences of s--adic digits. It is well known that for every real number $x\in[0;1]$ there exists a sequence
$(\alpha_k)$ such that $\left(\alpha_k\right)\in L$ and
$$
 x=\displaystyle\frac{\alpha_1}{s}+
   \displaystyle\frac{\alpha_2}{s^2}+\cdots+
   \displaystyle\frac{\alpha_k}{s^k}+\cdots\equiv\Delta^s_{\alpha_1\alpha_2\ldots\alpha_k\ldots}.
$$

We call the latter symbolic notation the \emph{$s$--adic representation} of the number $x$, and $\alpha_k=\alpha_k(x)$ its \emph{$k$th $s$--adic digit}. In general, the $k$th digit of a number $x$ is not a well-defined function of $x$, since the following equality holds
$$
\Delta^s_{c_1\ldots c_{k-1}c_k(0)}=\Delta^s_{c_1\ldots
c_{k-1}[c_k-1](s-1)},
$$
where $(i)$ denotes the period in the representation of the number. Numbers of this form are called \emph{$s$--adic rational}; they have exactly two $s$--adic representations and form a subset of the set of rational numbers. The remaining numbers have only one representation and are called \emph{$s$--adic irrational}. To ensure the correctness of the definition of the $k$th digit of a number, we agree to use only the first $s$--adic representation, namely, the one with period $(0)$. Then $\alpha_k(x)$ is a function correctly defined on $[0,1]$.

Recall that we define the \emph{frequency of the digit} $i \in \mathcal{A}s$ in the $s$--adic representation $\Delta^s{\alpha_1 \alpha_2 \ldots \alpha_k \ldots}$ of a number $x$ as the limit (if it exists):
$$
\nu_i(x)=\lim\limits_{n\to\infty}v_i^{(n)},
$$
where $v_i^{(n)}=n^{-1}N_i(x,n)$ s the relative frequency of the digit $i$, and
$N_i(x,n)=\# \{j:\, \alpha_j (x)=i, \, j\leqslant n\}$ is the quantity of digits $i$ in the $s$--adic representation of the number $x$ up to and including the $n$th position.

The concept of digit frequency in the $s$--adic representation of a number $x$, introduced by Émile Borel at the beginning of the 20th century \cite{Bor1}, proved to be highly productive. Researchers have used it effectively for various purposes, in particular: 1) to define different mathematical objects (sets, functions, measures, space transformations, etc.); 2) to solve a number of metric and probabilistic problems (establishing normal and abnormal properties of numbers, singularity of functions and probability distributions, etc.); 3) to solve problems in ergodic theory and fractal analysis.
In particular, one can use it to define self--similar but non--perfect \emph{Besicovitch--Eggleston sets}:
$$
E[\tau_0,\tau_1,\ldots,\tau_{s-1}]=\{x:x=\Delta^s_{\alpha_1\alpha_2\ldots\alpha_k\ldots},\,\nu_i(x)=\tau_i\geqslant0,\,\,i=\overline{0,s-1}\}.
$$
All Besicovitch--Eggleston sets have zero Lebesgue measure, except for the set
$$H_s=\{x:x=\Delta^s_{\alpha_1\alpha_2\ldots\alpha_k\ldots},\,\nu_i(x)=s^{-1},\,\,i=\overline{0,s-1}\},$$
of \emph{normal on the base $s$ numbers}, which, according to a well-known Borel's theorem \cite{Bor1}, has full Lebesgue measure.

The Hausdorff--Besicovitch fractal dimension of the set
$E[\tau_0,\tau_1,\ldots,\tau_{s-1}]$ is calculated \cite{Besic2,Egg,Bill} using the formula
$$
\alpha_0(E[\tau_0,\tau_1,\ldots,\tau_{s-1}])=
-\frac{\ln\tau_0^{\tau_0}\tau_1^{\tau_1}\ldots\tau_{s-1}^{\tau_{s-1}}}{\ln
s}.
$$

The \emph{asymptotic mean of digits} of the $s$--adic representation
$\Delta^s_{\alpha_1\alpha_2\ldots\alpha_k\ldots}$ of real number
$$x=\sum\limits^{\infty}_{k=1}\frac{\alpha_k}{s^k}\equiv\Delta^s_{\alpha_1\alpha_2\ldots\alpha_k\ldots},\:\:\alpha_k\in\mathcal{A}_s$$
is defined as the limit (if it exists):
$$r(x)=\lim\limits_{n\to\infty}\frac{1}{n}\sum\limits^{n}_{k=1}\alpha_k(x).$$

We introduced this concept in \cite{My1} and applied it to define and study the topological–metric properties of fractal sets \cite{My2}–\cite{My4}. It is closely connected to digit frequencies, because when the frequencies of all digits exist, the asymptotic mean of digits represents a certain averaged value. Specifically, if the $s$--adic representation of a number $x$ has digit frequencies $\nu_0, \nu_1, \ldots, \nu_{s-1}$, then its asymptotic mean of digits $r(x)$ is given by $$r(x)=\nu_1(x)+2\nu_2(x)+\ldots+(s-1)\nu_{s-1}(x).$$
Interestingly, this notion appeared in earlier works \cite{Besic2, Egg, HardiLitll, KennCoop} even without explicitly naming it.

In this paper, we introduce and study functions that are invariants of the asymptotic mean of digits in the $3$--adic ($s=3$) representation of a number, both everywhere and almost everywhere (in the sense of Lebesgue measure) on the interval $[0,1)$, as well as functions that preserve digit frequencies.

\section{Transformations of interval $[0,1)$ and functions that preserve digit frequencies}

Let $\Theta$ denote the set of numbers for which the frequencies  $\nu_0$,
$\nu_1$, $\ldots,$ $\nu_{s-1}$ of all digits exist.

Recall that a \emph{transformation} of a nonempty set $X$ is a bijective (one-to-one) mapping of the set onto itself.

\begin{definition}\label{o1}
We say that a \emph{transformation} $f$ of the half-interval $[0,1)$ \emph{preserves digit frequencies} if the equality
$$\nu_i\left(f(x)\right)=\nu_i(x),$$
holds for all $i\in \mathcal{A}_s$, $x\in\Theta$ and for $x\in [0,1)\setminus\Theta$ the frequencies $\nu_i(x)$ and $\nu_i(f(x))$ do not exist simultaneously.
\end{definition}

The identity transformation provides a trivial example of a function that preserves digit frequencies. The function $f$ defined on $[0,1)$ by the equality
$$
f(x)=\Delta^{s}_{\alpha_2(x)\alpha_1(x)\alpha_4(x)\alpha_3(x)\ldots}=y,
$$
since for any natural number $k$ we have
$$N_i(y,2k)=N_i(x,2k),$$
$$N_i(y,2k+1)=N_i(y,2k)+b, \text{ ~~~where~~~ } b\in\{0,1\}.$$
A simple example of a transformation that does not preserve digit frequencies is $$g(\Delta^3_{\alpha_1(x)\alpha_2(x)\ldots\alpha_n(x)\ldots})=\Delta^3_{\beta_1(x)\beta_2(x)\ldots\beta_n(x)\ldots},$$
where $\beta_i(x)=\alpha_i(x)+1(\mod 3)$ and moreover $g(\Delta^3_{(1)})\equiv \Delta^3_{(0)}$.
Indeed, for $x_0=\Delta^3_{(0)}$, $g(\Delta^3_{(0)})\equiv \Delta^3_{(1)}$ that is, $1=\nu_0(x_0)\neq\nu_0(g(x_0))=0$.

\begin{lemma}
The set $V$ of all transformations of the interval $[0;1)$ that preserve the frequencies of all digits in the $s$--adic representation of a number, with respect to the composition operation $\circ$ (superposition) of transformations, forms a noncommutative group.
\end{lemma}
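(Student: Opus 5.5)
Since $V$ sits inside the group of all bijections of $[0,1)$ under composition, the plan is to verify the subgroup criterion and then exhibit two elements of $V$ that do not commute. I will use a reformulation of Definition~\ref{o1}: $f\in V$ means that $f$ is a bijection of $[0,1)$, $f(\Theta)\subseteq\Theta$, $f([0,1)\setminus\Theta)\subseteq[0,1)\setminus\Theta$, and $\nu_i(f(x))=\nu_i(x)$ for all $x\in\Theta$ and $i\in\mathcal A_s$. (The clause ``do not exist simultaneously'' is read as: for $x\notin\Theta$ the frequency vector of $f(x)$ also fails to exist, i.e.\ $f(x)\notin\Theta$.)

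\emph{Closure under composition.} Let $f,g\in V$. The composition $f\circ g$ is a bijection. If $x\in\Theta$, then $g(x)\in\Theta$ and $\nu_i(g(x))=\nu_i(x)$. Applying $f$ gives $f(g(x))\in\Theta$ and $\nu_i(f(g(x)))=\nu_i(g(x))=\nu_i(x)$. If $x\notin\Theta$, then $g(x)\notin\Theta$, hence $f(g(x))\notin\Theta$. Therefore $f\circ g\in V$.

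\emph{Identity and associativity.} The identity map clearly belongs to $V$, and composition of maps is associative.

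\emph{Inverses.} Let $f\in V$. Since $f$ is a bijection that maps $\Theta$ into $\Theta$ and the complement of $\Theta$ into the complement, it must map $\Theta$ \emph{onto} $\Theta$. Indeed, if $y\in\Theta$ had a preimage $x\notin\Theta$, then $f(x)=y$ would lie outside $\Theta$, a contradiction. The same argument applies to the complement. Hence $f^{-1}$ also preserves $\Theta$ and its complement. For $y\in\Theta$ set $x=f^{-1}(y)\in\Theta$; then $\nu_i(y)=\nu_i(f(x))=\nu_i(x)=\nu_i(f^{-1}(y))$. So $f^{-1}\in V$. This step is where the \emph{both directions} clause of the definition is essential, and it is the only subtle point.

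\emph{Noncommutativity.} Take $f$ to be the pair-swap map from the text, and $h$ the map that swaps digits $2,3$, $4,5$, and so on, leaving $\alpha_1$ fixed. Each map permutes positions within blocks of bounded length, so it is an involution on digit sequences. It sends sequences not ending in $(s-1)$ to sequences of the same kind, so it is a bijection of $[0,1)$. It changes each count $N_i(x,n)$ by at most one, so it preserves the existence and the values of all frequencies. The composition $f\circ h$ shifts the digit sequence like a $3$-cycle on the first three positions, whereas $h\circ f$ acts as the inverse cycle. For example, with $x=\Delta^s_{1(0)}$ one gets $f(h(x))=\Delta^s_{001(0)}$ and $h(f(x))=\Delta^s_{01(0)}$, which are different numbers.

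The main obstacle is making sure that $f$ and $h$ really are bijections of $[0,1)$ given the convention of using the representation with period $(0)$. The key observation is that a tail $(s-1)$ is never created or destroyed by a position permutation that acts within finite blocks.
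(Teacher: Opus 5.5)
Your argument follows the paper's: the subgroup criterion inside the group of all bijections of $[0,1)$, closure and inverses by chaining the $\nu_i$-equalities, and a pair of non-commuting digit-position permutations. You are more careful than the paper in two places: you check that $f^{-1}$ maps $\Theta$ onto $\Theta$, and you check that your example maps are genuine frequency-preserving bijections of $[0,1)$. Your offset pair-swap $h$ is also a cleaner choice than the paper's $f_2$.

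There is one harmless slip in the noncommutativity step. The map $f\circ h$ sends $\Delta^s_{\alpha_1\alpha_2\alpha_3\ldots}$ to $\Delta^s_{\alpha_3\alpha_1\alpha_5\alpha_2\alpha_7\alpha_4\ldots}$, which is an infinite cycle of positions, not a $3$-cycle. At $x=\Delta^s_{1(0)}$ one actually has $f(h(x))=\Delta^s_{01(0)}$ and $h(f(x))=\Delta^s_{001(0)}$, so you swapped the two values. They are still distinct, so $f\circ h\neq h\circ f$ and the proof stands.
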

\begin{proof}
It is well known that the family of all transformations of an arbitrary set with respect to the operation $\circ$,
 $$[f_2\circ f_1](x)=f_2(f_1(x))$$
forms a group. To prove the lemma, we apply the subgroup criterion.

Let $f_1,f_2\in V$ that is, $\nu_i(f_j(x))=\nu_i(x)$, $j\in\{1,2\}$ і $f_1(x)=x'$, $f_2(x)=x''$.
The closure of the set $V$ under the operation $\circ$ follows from the equalities 
$$\nu_i(f_2(f_1(x)))=\nu_i(f_2(x'))=\nu_i(x')=\nu_i(f_1(x))=\nu_i(x).$$

We show that for every transformation $f\in V$ the inverse transformation $f^{-1}$ also belongs to $V$. From $f\in V$ we have $\nu_i(f(x))=\nu_i(x)$. If $y=f(x)$ then $x=f^{-1}(y)$ and
$$\nu_i(y)=\nu_i(f(x))=\nu_i(x)=\nu_i(f^{-1}(y)).$$
Hence, $f^{-1}\in V$.

The group $(V,\circ)$ is noncommutative since for the transformations of $[0;1)$ defined by
$$f_1(x)=\Delta^{s}_{\alpha_2(x)\alpha_1(x)\alpha_4(x)\alpha_3(x)\ldots\alpha_{2k}\alpha_{2k-1}\ldots},$$ $$f_2(x)=\Delta^{s}_{\alpha_3(x)\alpha_2(x)\alpha_1(x)\alpha_4(x)\ldots\alpha_{2k
+1}\alpha_{2k}\alpha_{2k-1}\ldots}$$ we obtain
$[f_2\circ f_1](x)=\Delta^s_{\alpha_4(x)\alpha_1(x)\alpha_2(x)\alpha_3(x)\ldots}$ and
$[f_1\circ f_2](x)=\Delta^s_{\alpha_2(x)\alpha_1(x)\alpha_4(x)\alpha_1(x)\ldots}$. Therefore, $f_2\circ f_1\neq f_1\circ f_2$.
\end{proof}

\begin{definition}
If a function $f$, defined on the interval $[0;1)$ with values in $[0;1)$, satisfies the conditions of Definition \ref{o1}, then we say that the \emph{function $f$ preserves the digit frequencies}.
\end{definition}

Note that every transformation of $[0;1)$ is a function, but not every function defined on $[0;1)$ is a transformation.
Simple examples of functions that preserve digit frequencies and are not transformations are the following functions defined on $[0;1)$:
$$
\omega\left(\Delta^{s}_{\alpha_1\alpha_2\ldots\alpha_n\ldots}\right)\equiv
            \Delta^{s}_{\alpha_2\alpha_3\ldots\alpha_n\ldots},
$$
$$
\delta_i\left(\Delta^{s}_{\alpha_1\alpha_2\ldots\alpha_n\ldots}\right)\equiv
            \Delta^{s}_{i\alpha_1\alpha_2\ldots\alpha_n\ldots},
$$
where $i$ is a fixed element of the alphabet $\mathcal{A}_s$,
$$f(\Delta^{s}_{\alpha_1\alpha_2\ldots\alpha_j\ldots})=\Delta^{s}_{\alpha_1\ldots\alpha_{j-1}\alpha_{j+1}\alpha_{j}\alpha_{j+2}\ldots},$$
where $j$ is a predetermined natural number.

\section{Connection with functions that preserve fractal dimension}

From a group--theoretic perspective fractal geometry can be viewed as the theory of invariants of group transformations of the space $R^n$ that preserve fractal dimension; that is, transformations for which the fractal dimensions of the image and preimage of any Borel set are equal \cite{AlPrTor1, PrTorb1}.

\begin{theorem}
Some functions preserve digit frequencies and do not preserve the Hausdorff--Besicovitch fractal dimension.
\end{theorem}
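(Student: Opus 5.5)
We will prove the theorem with an explicit example. The plan is to take a function that preserves digit frequencies in the sense of Definition~\ref{o1} and show that it sends some Borel set of Hausdorff--Besicovitch dimension zero onto a set of positive dimension.

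The natural candidate is a digit-insertion map that adds digits so sparsely that they do not affect frequencies, yet creates a great deal of freedom. Fix $s$ and a rapidly growing sequence of positions, for instance $n_k=k^2$. Define $f$ by taking the digits of $x$ in order and writing them into all positions except the $n_k$. At each position $n_k$, insert the digit $\alpha_{k}(x)$, so that the inserted digits reproduce the beginning of the expansion.

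Frequencies are preserved. Among the first $N$ digits of $f(x)$, only $O(\sqrt N)$ positions are special, and the remaining ones are the first $N-O(\sqrt N)$ digits of $x$. Hence $N_i(f(x),N)=N_i(x,N)+O(\sqrt N)$ for every $i$. So $\nu_i(f(x))$ exists if and only if $\nu_i(x)$ exists, and the two are equal. We also need $f(x)\in[0,1)$ and the expansion conventions to be respected. Only digits $0$ are ever inserted where $x$ has a tail of $0$'s, so the image never ends in the period $(s-1)$. Thus $f$ is a function $[0,1)\to[0,1)$ preserving frequencies.

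This insertion map, however, is bi-Lipschitz-like at the level of cylinders and probably preserves dimension, so the example needs something different. The better approach is to make $f$ \emph{contract} dimension. Consider the map
\[
f(x)=\Delta^s_{\alpha_1\,0\,\alpha_2\,0\,0\,\alpha_3\,0\,0\,0\ldots}
\]
with zeros padded in, and arrange the padding so that frequencies survive. Frequencies are not preserved by naive padding, so the padding itself must reproduce the frequencies. The cleanest device is to duplicate digits: $f(x)=\Delta^s_{\beta_1\beta_2\ldots}$, where each digit $\alpha_k$ is repeated $m_k$ times and $m_k\to\infty$ slowly, for instance $m_k=\lfloor\log_2(k+1)\rfloor+1$.

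We then need to check that repetition with slowly varying multiplicities preserves the existence and values of frequencies. The relative frequency of $i$ in $f(x)$ up to position $\sum_{j\le n}m_j$ is a weighted Cesàro mean of the indicators $[\alpha_j=i]$ with weights $m_j$. Since $m_j$ is monotone and $m_n/\sum_{j\le n}m_j\to0$, these weighted averages converge if and only if the ordinary averages converge, with the same limit. This is the standard equivalence of Riesz means for slowly varying weights; it is the main technical step, and we would prove it directly by Abel summation. Intermediate positions inside a block change counts by at most $m_n=o(\text{total})$.

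The dimension of the image is then zero. Choose $E=[0,1)$, which has dimension $1$. Its image $f(E)$ is covered at level $M_n=\sum_{j\le n}m_j\sim n\log n$ by $s^n$ cylinders of length $s^{-M_n}$. Therefore
\[
\dim_H f(E)\le\liminf_n \frac{n}{M_n}=0,
\]
while $\dim_H E=1$. Hence $f$ preserves digit frequencies but not Hausdorff--Besicovitch dimension, which proves the theorem. The map is not a transformation, which is consistent with the statement's wording "functions".
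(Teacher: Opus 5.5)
Your proof is correct, but it uses a different construction from the paper's. The paper's map collapses each Besicovitch--Eggleston set to a point. For $x\in\Theta$ it sets $f(x)=x'$, a single explicit point made of blocks of $[\tau_0 n]$ zeros, $[\tau_1 n]$ ones and $[\tau_2 n]$ twos, where $\tau_i=\nu_i(x)$; off $\Theta$ it sets $f(x)=x$. Frequency preservation is then an elementary count for one sequence. Failure of dimension preservation comes from the Besicovitch--Eggleston formula: $E[\tau_0,\tau_1,\tau_2]$ with $\tau_0\tau_1\tau_2\neq0$ has positive dimension but is sent to a point.

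You instead stretch every expansion by repeating $\alpha_k$ exactly $m_k\approx\log_2 k$ times. This costs you a summability lemma (equivalence of Ces\`aro means and weighted means). In return:
\begin{itemize}
\item the dimension part needs only $\dim_H[0,1)=1$ and the covering of $f([0,1))$ by $s^n$ cylinders of rank $M_n$;
\item the argument works in every base $s$;
\item your $f$ is injective and strictly increasing, right-continuous, with a jump at every nonzero $s$-adic rational because $m_k\geqslant 2$.
\end{itemize}
So you show that frequency preservation does not control dimension even without collapsing Besicovitch--Eggleston sets. Being discontinuous, your map does not settle the paper's open question about continuous examples. The opening paragraph on the insertion map plays no role and should be deleted.

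One justification needs correcting. Write $a_j$ for the indicator of $\alpha_j=i$, $A_n=\sum_{j\le n}a_j$ and $W_n=\sum_{j\le n}m_ja_j$. Monotonicity of $m_j$ together with $m_n/M_n\to0$ does not give the implication from Ces\`aro convergence to weighted convergence. For $m_n=2^{\lfloor\sqrt n\rfloor}$ both conditions hold, yet the last $2\sqrt n$ or so indices carry about half of $M_n$. Hence a density-zero set of indices can keep $W_n/M_n$ at least about $1/2$ infinitely often.

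Abel summation,
$W_n-\nu M_n=m_n(A_n-\nu n)-\sum_{j<n}(m_{j+1}-m_j)(A_j-\nu j)$,
needs $m_j$ nondecreasing and $n\,m_n=O(M_n)$. For your $m_k=\lfloor\log_2(k+1)\rfloor+1$ one has $n\,m_n/M_n\to1$, so the first clause of Definition~\ref{o1} holds. The reverse direction,
$A_n=W_n/m_n+\sum_{j<n}W_j\,(1/m_j-1/m_{j+1})$,
needs only monotonicity. It is what gives $f(x)\notin\Theta$ for $x\notin\Theta$, so both directions are genuinely used.

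Also state explicitly that the repeated digit sequence ends in the period $(s-1)$ if and only if $(\alpha_k)$ does. Hence it is the admissible expansion of $f(x)$, and $f(x)<1$.
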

\begin{proof} The set $\Theta$ is the union of all possible Besicovitch--Eggleston sets. We define the function $f$ for all $x \in [0,1)$ by the equality
$$f(x)=\begin{cases}\Delta^3_{\underbrace{0\ldots0}_{[\tau_0\cdot1]}
                \underbrace{1\ldots1}_{[\tau_1\cdot1]}
                \underbrace{2\ldots2}_{[\tau_2\cdot1]}\ldots
                \underbrace{0\ldots0}_{[\tau_0\cdot n]}
                \underbrace{1\ldots1}_{[\tau_1\cdot n]}
                \underbrace{2\ldots2}_{[\tau_2\cdot n]}\ldots}\equiv x',\text{ якщо }x\in\Theta,\\
                x,\text{ if }x\not\in\Theta,
       \end{cases}$$
where $\tau_0=\nu_0(x)$, $\tau_1=\nu_1(x)$, $\tau_2=\nu_2(x)$.

It is clear that $x'$ depends on the digit frequencies of the number $x$, that is, $x'=g(\nu_0(x),\nu_1(x),\nu_2(x))$ and the function $f$ is not a transformation of $[0,1)$, since $f(\Delta^3_{i\alpha_1\alpha_2\ldots\alpha_n\ldots})=f(\Delta^3_{\alpha_1\alpha_2\ldots\alpha_n\ldots})$.
Moreover, the image of the Besicovitch--Eggleston set $E[\tau_0, \tau_1, \tau_2]$ under the mapping $f$ is a single point $x'$.

We now show that $\nu_i(x')=\nu_i(x)$ if $x\in E[\tau_0,\tau_1,\tau_2]$.
For every sufficiently large natural number $n$, there exists $k_n\in N$ such that
$$\sum\limits^{k_n}_{j=1}j=\dfrac{k_n(k_n+1)}{2}\leqslant n<
\dfrac{(k_n+1)(k_n+2)}{2}=\sum\limits^{k_n+1}_{j=1}j.$$

Since $z-1<[z]\leqslant z$ for all $z\in R$, it follows that
$$\sum\limits^{n}_{j=1}([\tau_0\cdot j]+[\tau_1\cdot j]+[\tau_2\cdot j])\leqslant
\sum\limits^{n}_{j=1}(\tau_0+\tau_1+\tau_2)j=\dfrac{n(n+1)}{2},$$
$$\sum\limits^{n}_{j=1}([\tau_0\cdot j]+[\tau_1\cdot j]+[\tau_2\cdot j])>
\sum\limits^{n}_{j=1}((\tau_0+\tau_1+\tau_2)j-1)=\dfrac{n(n+1)}{2}-n.$$

For each $i \in {0,1,2}$, we have\\
$\dfrac{N_i(x',n)}{n}\geqslant\dfrac{\sum\limits^{k_n}_{j=1}[\tau_i\cdot
j]}{n}> \dfrac{\sum\limits^{k_n}_{j=1}(\tau_i\cdot
j-1)}{\dfrac{(k_n+1)(k_n+2)}{2}}=
\dfrac{\tau_i\dfrac{k_n(k_n+1)}{2}-k_n}{\dfrac{(k_n+1)(k_n+2)}{2}}\to
\tau_i (n\to\infty)$.\\
On the other hand, \\
$\dfrac{N_i(x',n)}{n}<\dfrac{\sum\limits^{k_n+1}_{j=1}[\tau_i\cdot
j]}{n}< \dfrac{\sum\limits^{k_n}_{j=1}\tau_i\cdot
j}{\dfrac{k_n(k_n+1)}{2}}=
\dfrac{\tau_i\dfrac{(k_n+1)(k_n+2)}{2}}{\dfrac{k_n(k_n+1)}{2}}\to
\tau_i (n\to\infty)$.

Hence, $\nu_i(x')=\tau_i$ for all
$i\in\{0,1,2\}$ which means that the function $f(x)$ preserves digit frequencies. However, if $\tau_0\tau_1\tau_2\neq0$ then taking into account the Besicovitch--Eggleston formula
 $\alpha_0(E[\tau_0,\tau_1,\tau_2])=-\dfrac{\ln \tau_0^{\tau_0}\tau_1^{\tau_1}\tau_2^{\tau_2}}{\ln 3}\neq0$.
 On the other hand,\\ $\alpha_0(\{f(x): x\in E[\tau_0,\tau_1,\tau_2]\})=\alpha_0(x')$=0, thus
$$\alpha_0(E[\tau_0,\tau_1,\tau_2])\neq\alpha_0(f(E[\tau_0,\tau_1,\tau_2])).$$
\end{proof}

It remains an open question whether there exist \emph{continuous} functions that preserve digit frequencies but do not preserve the Hausdorff--Besicovitch dimension.

\section{Transformations of $[0,1)$ and functions that preserve the asymptotic mean of digits but do not preserve digit frequencies}
\begin{definition}
We say that a function $f$ \emph{preserves the asymptotic mean of digits} if
$$r\left(f(x)\right)=r(x)$$
holds for all $x \in [0,1)$ for which $r(x)$ exists.
\end{definition}

\begin{definition}
We say that a function $f$ \emph{preserves the asymptotic mean of digits almost everywhere} (in the sense of Lebesgue measure) if the equality
$$r\left(f(x)\right)=r(x)$$ holds on a set of full Lebesgue measure.
\end{definition}

Since $r(x)=\nu_1(x)+\nu_2(x)$ then transformations of $[0,1)$ and functions that preserve digit frequencies also preserve the asymptotic mean of digits.

It is not difficult to give an example of a function that does not preserve the asymptotic mean of digits. In particular, such a function is defined by the equality
$$I(\Delta^s_{\alpha_1\alpha_2\ldots\alpha_n\ldots})\equiv \Delta^s_{[s-1-\alpha_1][s-1-\alpha_2]\ldots[s-1-\alpha_n]\ldots},$$
since
 $I(\Delta^3_{(0)})=\Delta^3_{(2)}$ і
$2=r\left(I(x)\right)\neq r(x)=0$.
The function $I$ is called the \emph{digit inverter} of the $s$--adic representation of a number.

\begin{lemma}\label{L1}
For the $3$--adic numeral system we have:
\begin{center}
if $r(x)=0$ then $\nu_0(x)=1$ and $\nu_1(x)=\nu_2(x)=0$;\\
if $r(x)=2$ then $\nu_2(x)=1$ and $\nu_0(x)=\nu_1(x)=0$.
\end{center}
\end{lemma}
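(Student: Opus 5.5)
The plan is to work directly with the counting functions $N_i(x,n)$ and the partial averages, and never to assume in advance that the frequencies exist. For every $n$ we have the identity
$$N_0(x,n)+N_1(x,n)+N_2(x,n)=n,\qquad \sum_{k=1}^{n}\alpha_k(x)=N_1(x,n)+2N_2(x,n).$$
The point of the lemma is that the existence and the value of $r(x)$ at the extreme points $0$ and $2$ force the existence of all three frequencies. So both claims should follow from elementary inequalities between the relative frequencies $v_i^{(n)}=N_i(x,n)/n$.

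For the case $r(x)=0$, note that
$$0\leqslant v_1^{(n)}\leqslant v_1^{(n)}+2v_2^{(n)}=\frac1n\sum_{k=1}^{n}\alpha_k(x)\to 0,$$
and similarly $0\leqslant 2v_2^{(n)}\leqslant \frac1n\sum_{k=1}^n\alpha_k(x)$. By the squeeze principle $v_1^{(n)}\to0$ and $v_2^{(n)}\to0$, so $\nu_1(x)=\nu_2(x)=0$. Then $v_0^{(n)}=1-v_1^{(n)}-v_2^{(n)}\to1$, which gives $\nu_0(x)=1$.

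For the case $r(x)=2$, I would rewrite the deficit from the maximal mean:
$$2-\frac1n\sum_{k=1}^{n}\alpha_k(x)=\frac1n\sum_{k=1}^n\bigl(2-\alpha_k(x)\bigr)=2v_0^{(n)}+v_1^{(n)}.$$
This quantity is a sum of nonnegative terms and tends to $0$, so again by squeezing $v_0^{(n)}\to0$ and $v_1^{(n)}\to0$. Hence $v_2^{(n)}=1-v_0^{(n)}-v_1^{(n)}\to1$. Alternatively, one can reduce this case to the first by applying the digit inverter $I$: it satisfies $r(I(x))=2-r(x)$ and swaps the roles of the digits $0$ and $2$.

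There is no real obstacle. The only point requiring care is the one just mentioned: we must not use the formula $r=\nu_1+2\nu_2$, since it presupposes that the frequencies exist. The argument has to derive their existence from the limits of $v_i^{(n)}$, which is exactly what the nonnegativity-and-squeeze step does.
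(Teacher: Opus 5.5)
Your proof is correct and follows essentially the same route as the paper: a squeeze on the relative frequencies $v_i^{(n)}$ using nonnegativity and $v_0^{(n)}+v_1^{(n)}+v_2^{(n)}=1$, without presupposing that the frequencies exist. The only cosmetic difference is in the case $r(x)=2$, where the paper first gets $v_2^{(n)}\to 1$ from $v_2^{(n)}=r_n-1+v_0^{(n)}\geqslant r_n-1$ and then squeezes $v_0^{(n)},v_1^{(n)}$, whereas you squeeze $v_0^{(n)},v_1^{(n)}$ directly via $2-r_n=2v_0^{(n)}+v_1^{(n)}$.
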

\begin{proof}
Let $r(x)=0$, that is $\lim\limits_{n\to\infty} r_n(x)=0$. Since $$r_n(x)=v^{(n)}_1(x)+2v^{(n)}_2(x)\geqslant v^{(n)}_i(x)\geqslant0,$$ for both $i \in {1,2}$, it follows from $r(x) = 0$ that
$\nu_1(x)=\nu_2(x)=0$ and therefore $\nu_0(x)=1$.

Now let $r_n(x)=2$, that is, $\lim\limits_{n\to\infty} r_n(x)=2$. Since $$r_n(x)=v^{(n)}_1(x)+2v^{(n)}_2(x)=1-v^{(n)}_0(x)+v^{(n)}_2(x),$$ we have
$1\geqslant v^{(n)}_2(x)=r_n-1+v^{(n)}_0(x)\geqslant r_n-1$. Taking the limit as $\lim\limits_{n\to\infty} r_n(x)=2$, it follows that $\lim\limits_{n\to\infty} v^{(n)}_2(x)=1$. Consequently, for $i\in\{0,1\}$
$0\leqslant v^{(n)}_i(x)=1-v^{(n)}_2(x)-v^{(n)}_{1-i}(x)\leqslant 1-v^{(n)}_2(x)$. But $\lim\limits_{n\to\infty} 1-v^{(n)}_2(x)=0$, hence $\lim\limits_{n\to\infty} v^{(n)}_i(x)=0$, $i\in\{0,1\}$.
\end{proof}

\begin{theorem}
There exists no function $f:~[0;1)\to[0;1)$ for which both $r(x)=r(f(x))\,\,\,\,(2)$ and
$\sum\limits^{2}_{j=0}(\nu_j(x)-\nu_j(f(x)))^2\neq0\,\,\,\,(1)$ hold everywhere on the set $\Theta$.\\

In other words, any function $f:~[0;1)\to[0;1)$ that preserves the asymptotic mean of ternary digits must also preserve the frequencies of all digits; there is no number $x\in \Theta$ for which both conditions
$$\sum\limits^{2}_{j=0}(\nu_j(x)-\nu_j(f(x)))^2\neq0,\eqno(1)$$
$$r(x)=r(f(x)).\eqno(2)$$
are satisfied simultaneously.
\end{theorem}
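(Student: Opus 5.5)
The plan is to find a single point of $\Theta$ at which conditions (1) and (2) cannot both hold, using Lemma~\ref{L1}. That lemma says the asymptotic mean determines all three digit frequencies at the extreme values $r=0$ and $r=2$. So if $f$ preserves $r$, it must preserve the frequencies at every $x\in\Theta$ with $r(x)\in\{0,2\}$, and (1) fails there. Since the theorem requires (1) and (2) \emph{everywhere} on $\Theta$, one such point gives the contradiction.

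\textbf{Step 1.} Suppose, for contradiction, that $f:[0;1)\to[0;1)$ satisfies (1) and (2) at every $x\in\Theta$. Take $x_0=\Delta^3_{(0)}=0$. Then $\nu_0(x_0)=1$ and $\nu_1(x_0)=\nu_2(x_0)=0$, so $x_0\in\Theta$ and $r(x_0)=0$.

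\textbf{Step 2.} By (2), $r(f(x_0))$ exists and equals $0$. The first part of Lemma~\ref{L1} then gives that all frequencies of $f(x_0)$ exist, with $\nu_0(f(x_0))=1$ and $\nu_1(f(x_0))=\nu_2(f(x_0))=0$. Hence $f(x_0)\in\Theta$ and $\nu_j(f(x_0))=\nu_j(x_0)$ for $j=0,1,2$. The sum in (1) is therefore $0$ at $x_0$, which contradicts (1). The same argument works at $x_1=\Delta^3_{(2)}$, using the second part of Lemma~\ref{L1}.

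\textbf{Interpretation.} The main point needing care is how to read the statement; the proof itself is immediate. The reformulation ``any function that preserves the asymptotic mean must preserve all frequencies'' cannot hold literally at every point of $\Theta$. For example, a number with frequencies $(\tfrac12,0,\tfrac12)$ and a number with frequencies $(0,1,0)$ both have $r=1$. So I would establish, and state explicitly, only the following: (1) and (2) cannot hold simultaneously on all of $\Theta$. More precisely, (2) forces (1) to fail on the whole set $\{x\in\Theta: r(x)\in\{0,2\}\}$.
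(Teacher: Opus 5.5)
Your proof is correct and is the paper's own argument: both apply Lemma~\ref{L1} at the extreme values $r\in\{0,2\}$. The paper does this for arbitrary $x\in E[1,0,0]$ or $x\in E[0,0,1]$, while you use the specific point $x_0=\Delta^3_{(0)}=0$; your caveat that the ``in other words'' reformulation is false in general is also right. One small slip in your aside: $\Delta^3_{(2)}=1\notin[0;1)$, so for the $r=2$ case use some $x\in[0;1)\cap E[0,0,1]$ instead, for example the number whose $k$th ternary digit is $0$ when $k$ is a perfect square and $2$ otherwise; this does not affect your main argument.
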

\begin{proof}
Let $x\in E[0,0,1]$ then $r(f(x))=r(x)=2$. By Lemma \ref{L1} the number  $y=f(x)$ belongs to the set $E[0,0,1]$, which means that inequality (1) does not hold.

Similarly, if $x\in E[1,0,0]$, then we have the inclusion $f(x)\in
E[1,0,0]$.
\end{proof}

\section{Preservation of the asymptotic mean and digit frequencies almost everywhere}

A natural question arises about the existence of a function $f$ that preserves the asymptotic mean of digits of a number but does not preserve digit frequencies \emph{almost everywhere} (in the sense of Lebesgue measure) on the set $\Theta$ and hence also on the set $H$ of normal numbers.

We define the function $f$ at the point $x=\Delta^3_{\alpha_1\alpha_2\ldots\alpha_n\ldots}$ on $[0;1)$ by equality
$$f(x)=\begin{cases}
x, \text{~~~if~~~} x\in[0;1]\setminus H,\\
\Delta^3_{\beta_1\beta_2\ldots\beta_n\ldots}, \text{~~~if~~~} x\in H,
\end{cases}$$
where $\beta_n=\alpha_n$ whenever $\alpha_n\neq1$ and for the sequence $n_k$, such that $\alpha_{n_k}=1$ we obtain
$$
\beta_{n_k}=\begin{cases}
0\text{~~~if~~~} n_k=0(\mod 7),\\
2\text{~~~if~~~} n_k=1(\mod 7),\\
\alpha_{n_k} \text{~~~in the remaining cases.}
\end{cases}
$$
Hence the image of the function $f(x)$ at a normal point is obtained by replacing every seventh ternary digit ``1'' of the number with ``0'' and the following digit ``1'' with ``2''.

\begin{lemma}\label{L3}
The cardinality of the set $\{1,2,\ldots,n\}\cap\{7k: k\in N\}$ is
$\left[\dfrac{n}{7}\right]$, The cardinality of the set
$\{1,2,\ldots,n\}\cap\{7k+1: k\in N\}$ is
$\left[\dfrac{n-1}{7}\right]$.
\end{lemma}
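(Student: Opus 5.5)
Both counts reduce to describing the elements of each set explicitly and counting the admissible values of $k$. Here $N=\{1,2,\ldots\}$, so $k\geqslant 1$.

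\emph{The multiples of $7$.} An element of $\{1,\ldots,n\}\cap\{7k:k\in N\}$ is $7k$ with $k\geqslant1$ and $7k\leqslant n$, that is, $1\leqslant k\leqslant n/7$. Since $k$ is an integer, this is the same as $1\leqslant k\leqslant [n/7]$. The map $k\mapsto 7k$ is injective, so the set has exactly $[n/7]$ elements. When $n<7$ the range is empty and $[n/7]=0$, so the formula still holds.

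\emph{The numbers $\equiv 1 \pmod 7$.} An element of the second set is $7k+1\leqslant n$ with $k\geqslant1$. This gives $1\leqslant k\leqslant (n-1)/7$, equivalently $1\leqslant k\leqslant[(n-1)/7]$. By the same injectivity argument, the count is $[(n-1)/7]$.

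\emph{The main obstacle is the convention on $N$.} If $0\in N$ were intended, the first count would be unchanged, because $0\notin\{1,\ldots,n\}$. The second set, however, would also contain $1=7\cdot0+1$, and its cardinality would become $[(n-1)/7]+1$. So I will state explicitly that $N$ starts at $1$, which makes the lemma exact as written.

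For the intended application this convention does not matter. Each count is $n/7+O(1)$, so $\frac1n$ times either count tends to $\frac17$. This limit is what will be used later to compute the digit frequencies of $f(x)$ for normal $x$.
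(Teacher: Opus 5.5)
Your proof is correct and is essentially the paper's argument. The paper takes the largest $k$ with $7k\leqslant n<7(k+1)$ (resp.\ $7k+1\leqslant n<7(k+1)+1$) and reads off $[n/7]=k$ (resp.\ $[(n-1)/7]=k$), which amounts to your count of the admissible $k$ with $1\leqslant k\leqslant[n/7]$. Your treatment of the empty case for small $n$, and of the convention $N=\{1,2,\ldots\}$ that the second count needs, makes explicit what the paper leaves implicit.
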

\begin{proof}
Let $k\in N$ be such that $7k\leqslant n< 7(k+1)$. From this it follows that $k\leqslant
\dfrac{n}{7}< (k+1)$, hence $\left[\dfrac{n}{7}\right]=k$.
Let $k\in N$ be such that $7k+1\leqslant n< 7(k+1)+1$. From this it follows that
$k\leqslant \dfrac{n-1}{7}< (k+1)$, hence
$\left[\dfrac{n-1}{7}\right]=k$.
\end{proof}

\begin{theorem}\label{T2}
If $x\in H$, then $\nu_0(f(x))=\dfrac{8}{21}$,
$\nu_1(f(x))=\dfrac{8}{21}$ і $\nu_2(f(x))=\dfrac{5}{21}$.
\end{theorem}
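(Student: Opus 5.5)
We read the construction of $f$ in the way the stated values require. The digit being relabelled is $2$, not $1$: $\beta_n=\alpha_n$ whenever $\alpha_n\neq 2$, while an occurrence $\alpha_n=2$ becomes $0$ if $n\equiv 0 \pmod 7$, becomes $1$ if $n\equiv 1\pmod 7$, and stays $2$ otherwise. Only under this reading do $\nu_0$ and $\nu_1$ each gain $\frac1{21}$ and $\nu_2$ lose $\frac2{21}$, starting from $\frac13=\frac7{21}$. If instead the digit $1$ were relabelled, we would obtain $\nu_1=\frac5{21}$ and $\nu_0=\nu_2=\frac8{21}$, which is not the statement. So the whole argument is a counting argument along the residue classes $0$ and $1$ modulo $7$.

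\textbf{Step 1: exact counting identities.} For $y=f(x)$ with $x\in H$, put
$$A_j(n)=\#\{m\leqslant n:\ m\equiv j \pmod 7,\ \alpha_m(x)=2\},\qquad j\in\{0,1\}.$$
Reading the construction position by position gives
$$N_0(y,n)=N_0(x,n)+A_0(n),\qquad N_1(y,n)=N_1(x,n)+A_1(n),$$
$$N_2(y,n)=N_2(x,n)-A_0(n)-A_1(n).$$
Since $x\in H$, we have $N_i(x,n)/n\to\frac13$ for each $i$. It therefore suffices to show that $A_j(n)/n\to\frac1{21}$ for $j=0,1$. Then $\nu_0(y)=\nu_1(y)=\frac13+\frac1{21}=\frac8{21}$ and $\nu_2(y)=\frac13-\frac2{21}=\frac5{21}$.

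\textbf{Step 2: the main obstacle, equidistribution along residue classes.} By Lemma \ref{L3}, the class $\{7k\}$ contains $[n/7]$ positions up to $n$ and the class $\{7k+1\}$ contains $[(n-1)/7]$. What we need is that, asymptotically, one third of the positions in each class carry the digit $2$. Simple normality, meaning only single-digit frequencies, does not imply this. For example, a number with digit $2$ placed preferentially at positions divisible by $7$ can still have all three frequencies equal to $\frac13$. So at this point we must use that $H$ is the set of numbers normal in base $3$ in Borel's full sense, i.e.\ every block of length $k$ has frequency $3^{-k}$. This is the sense in which $H$ has full measure by Borel's theorem, which is all the surrounding discussion uses.

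We then invoke the classical fact that a number normal in base $3$ is normal in base $3^7$. Equivalently, the aligned blocks $\alpha_{7k+1}\ldots\alpha_{7k+7}$ are equidistributed among the $3^7$ words.

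\textbf{Step 3: extracting the limits.} Summing over the $3^6$ aligned words of length $7$ whose last letter is $2$, the number of $k<K$ with $\alpha_{7k+7}=2$ is $K/3+o(K)$. With $K=[n/7]$ this gives $A_0(n)=\frac{n}{21}+o(n)$. For the class $1\bmod 7$, the same argument with the first letter of each aligned block equal to $2$ gives $A_1(n)=\frac{n}{21}+o(n)$. The truncation at $n$ changes the counts by at most one block, hence by $O(1)$, and the boundary terms are handled by Lemma \ref{L3}.

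Substituting into the identities of Step 1 and letting $n\to\infty$ yields the asserted frequencies. We would also remark that $r(f(x))=\frac8{21}+2\cdot\frac5{21}=\frac{18}{21}\neq 1=r(x)$. So the frequencies change, and one must keep track of which digit is relabelled.
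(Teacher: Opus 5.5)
Your argument is internally correct for the map you define, but it does not prove the paper's result, because you resolved the mismatch between the construction and the printed values in the wrong direction. The paper explicitly relabels the digit $1$: $\beta_n=\alpha_n$ whenever $\alpha_n\neq 1$. The $7$th, $14$th, \dots\ occurrence of $1$ becomes $0$, and the $8$th, $15$th, \dots\ occurrence becomes $2$. The ``$n_k\equiv 0 \pmod 7$'' in the formula has to be read as $k\equiv 0\pmod 7$. This is shown both by the phrase ``every seventh ternary digit $1$'' and by the count in the proof, $N_1(f(x),k)=\varphi(k)-[\varphi(k)/7]-[(\varphi(k)-1)/7]$ with $\varphi(k)=N_1(x,k)$. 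For this map the paper's proof ends with $\nu_0(f(x))=\nu_2(f(x))=\frac{8}{21}$ and $\nu_1(f(x))=\frac{5}{21}$. These are exactly the values you computed and discarded; the printed statement simply interchanges the values of $\nu_1$ and $\nu_2$. Two things confirm this: the sentence right after the theorem, $r(f(x))=\frac{5}{21}+2\cdot\frac{8}{21}=1$, and the purpose of the section, which is to exhibit a map preserving $r$ on $H$ but not the frequencies. Your map has $r(f(x))=\frac{18}{21}\neq r(x)$, as you note yourself, so it loses precisely the property the theorem is used for.

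Your reading also creates an obstacle the paper does not have. Indexing by position modulo $7$ forces you, correctly for that reading, to replace $H$ by the Borel-normal numbers and to invoke normality in base $3^7$. But the paper defines $H=\{x:\nu_0(x)=\nu_1(x)=\nu_2(x)=\frac13\}$, i.e.\ simple normality, and the theorem is a claim about every $x\in H$. Under your reading the claim is false for some simply normal $x$, as your own example shows. With indexing by occurrences, simple normality suffices. By Lemma \ref{L3}, among the first $\varphi=N_1(x,n)$ occurrences of $1$, exactly $[\varphi/7]$ become $0$ and $[(\varphi-1)/7]$ become $2$. Hence $N_0(f(x),n)=N_0(x,n)+[\varphi/7]$, $N_2(f(x),n)=N_2(x,n)+[(\varphi-1)/7]$ and $N_1(f(x),n)=\varphi-[\varphi/7]-[(\varphi-1)/7]$. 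Since $\varphi/n\to\frac13$, this gives $\nu_0,\nu_1,\nu_2$ equal to $\frac{8}{21},\frac{5}{21},\frac{8}{21}$ respectively. Counting along occurrences rather than positions is the idea your proposal is missing.
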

\begin{proof}
Let  $k$ be a sufficiently large natural number and let $\varphi(k)\in N$ (where $\varphi$ is Euler’s function) be such that $$n_{\varphi(k)}\leqslant k\leqslant n_{\varphi(k)+1}.$$

Since $\nu_1(x)=\dfrac{1}{3}$ then
$\dfrac{1}{3}=\lim\limits_{n\to\infty}\dfrac{N(x,n_k)}{n_k}=\lim\limits_{k\to\infty}\dfrac{k}{n_k}$
and, taking into account Lemma \ref{L3}, we have \\
$\dfrac{N_1(f(x),k)}{k}=\dfrac{\varphi(k)-\left[\frac{\varphi(k)}{7}\right]-\left[\frac{\varphi(k)-1}{7}\right]}{k}\geqslant
\dfrac{\varphi(k)-\frac{\varphi(k)}{7}-\frac{\varphi(k)-1}{7}+2}{n_{\varphi(k)+1}}\to
\dfrac{1}{3}-\dfrac{1}{21}-\dfrac{1}{21}=\dfrac{5}{21}$ for
$k\to\infty$.\\
$\dfrac{N_1(f(x),k)}{k}=\dfrac{\varphi(k)-\left[\frac{\varphi(k)}{7}\right]-\left[\frac{\varphi(k)-1}{7}\right]}{k}\leqslant
\dfrac{\varphi(k)-\frac{\varphi(k)}{7}-\frac{\varphi(k)-1}{7}}{n_{\varphi(k)}}\to\dfrac{5}{21}$
for $k\to\infty$. \\
Hence, $\nu_1(f(x))=\dfrac{5}{21}$.

Let $N_0(x,k)$ denote the number of digits ``0'' in the block $\alpha_1\alpha_2\ldots\alpha_k$. Since $\nu_0=\dfrac{1}{3}$ we have
$\lim\limits_{k\to\infty}\dfrac{N_0(x,k)}{k}=\dfrac{1}{3}$ and taking into account Lemma \ref{L3}, it follows that

$\dfrac{N_0(f(x),k)}{k}=\dfrac{N_0(x,k)+\left[\frac{\varphi(k)}{7}\right]}{k}\geqslant
\dfrac{N_0(x,k)}{k}+\dfrac{\frac{\varphi(k)}{7}-1}{n_{\varphi(k)+1}}\to\dfrac{1}{3}+\dfrac{1}{21}=\dfrac{8}{21}$
for $k\to\infty$.

$\dfrac{N_0(f(x),k)}{k}=\dfrac{N_0(x,k)+\left[\frac{\varphi(k)}{7}\right]}{k}\leqslant
\dfrac{N_0(x,k)}{k}+\dfrac{\frac{\varphi(k)}{7}}{n_{\varphi(k)}}\to\dfrac{1}{3}+\dfrac{1}{21}=\dfrac{8}{21}$
for $k\to\infty$.\\ Hence, $\nu_0(f(x))=\dfrac{8}{21}$. Since
$\nu_0+\nu_1+\nu_2=1$ then
$\nu_2(f(x))=1-\dfrac{5}{21}-\dfrac{8}{21}=\dfrac{8}{21}$.
\end{proof}

If $x\in H$ then $r(x)=\dfrac{1}{3}+2\dfrac{1}{3}=1$ hence, by Theorem \ref{T2} we obtain $r(f(x))=\dfrac{5}{21}+2\dfrac{8}{21}=1$.

Thus, for any normal on the base 3 number $x$ the function $f$ preserves the asymptotic mean of digits almost everywhere. Function $f$ preserves the asymptotic mean of digits almost everywhere in the set $\Theta$ if and only if $$\lambda(\{f(x:x\in H)\}\cap
H)=1.$$

For each $x\in H$ we assign a corresponding $y\in H$ such that
$r(x)=r(y)=1$. Therefore, the set of functions $f: H\to H$ that preserve the asymptotic mean of digits almost everywhere has the cardinality of a hypercontinuum. It follows that the set of functions $f: [0;1]\to [0;1]$
preserving the asymptotic mean of digits almost everywhere also has hypercontinuum cardinality.

\section{Preservation of the asymptotic digit mean by a function when digit frequencies do not exist}

Let us construct an example of a function $f(x)$ that preserves the asymptotic mean of ternary digits, i.e. $r(x)=r(f(x))$ for any
$x\in H$ and for which the digit frequencies $\nu_i(f(x))$ do not exist for all $i\in\{0,1,2\}$.

Consider the sequence $(\beta_n)$ defined by $\beta_n= \Delta^3_{\underbrace{0\ldots0}_{1!}\underbrace{1\ldots1}_{2!}\ldots\underbrace{0\ldots0}_{(2n-1)!}\underbrace{1\ldots1}_{2n!}\ldots}$.

Let
$f(x)=\begin{cases}
            x, \text{ if } x\in[0;1)\setminus  H,\\
            \Delta^3_{\underbrace{0\ldots0}_{[\tau_{01}1^x]}
                      \underbrace{1\ldots1}_{[\tau_{11}1^x]}
                      \underbrace{2\ldots2}_{[\tau_{21}1^x]}\ldots
                      \underbrace{0\ldots0}_{[\tau_{0n}n^x]}
                      \underbrace{1\ldots1}_{[\tau_{1n}n^x]}
                      \underbrace{2\ldots2}_{[\tau_{2n}n^x]}\ldots}, \text{ if } x\in H,
      \end{cases}$
where
$$
(\tau_{0n};\tau_{1n};\tau_{2n})=
          \begin{cases}
            (0,4;0,2;0,4), \text{ if } \beta_n=0,\\
            (0,3;0,4;0,3), \text{ if } \beta_n=1.
          \end{cases}
$$

\begin{lemma}
For any $\alpha>0$ the following equalities hold:
$$\lim\limits_{n\to\infty}\dfrac{(n+1)^{1+\alpha}}{1^{1+\alpha}+2^{1+\alpha}+\ldots+n^{1+\alpha}}=0,\:\:\:\:
\lim\limits_{n\to\infty}\dfrac{n^{2+\alpha}}{1^{1+\alpha}+2^{1+\alpha}+\ldots+n^{1+\alpha}}=2+\alpha.$$
\end{lemma}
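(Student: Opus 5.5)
Both limits reduce to a two-sided estimate of the sum $S_n=\sum_{k=1}^n k^{1+\alpha}$. I will obtain it by comparison with an integral: the function $t\mapsto t^{1+\alpha}$ is increasing on $[0,\infty)$, so
$$\int_{k-1}^{k}t^{1+\alpha}\,dt\leqslant k^{1+\alpha}\leqslant\int_{k}^{k+1}t^{1+\alpha}\,dt .$$
Summing over $k=1,\ldots,n$ gives
$$\frac{n^{2+\alpha}}{2+\alpha}\leqslant S_n\leqslant\frac{(n+1)^{2+\alpha}-1}{2+\alpha}.$$
This pair of bounds is the only real input; everything else is division.

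For the second limit, I divide $n^{2+\alpha}$ by these bounds:
$$\frac{(2+\alpha)\,n^{2+\alpha}}{(n+1)^{2+\alpha}-1}\leqslant\frac{n^{2+\alpha}}{S_n}\leqslant 2+\alpha .$$
The left side tends to $2+\alpha$, because $\left((n+1)/n\right)^{2+\alpha}\to1$ and $n^{-(2+\alpha)}\to0$. The squeeze principle then gives the limit $2+\alpha$.

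For the first limit, I use only the lower bound on $S_n$:
$$0<\frac{(n+1)^{1+\alpha}}{S_n}\leqslant\frac{(2+\alpha)(n+1)^{1+\alpha}}{n^{2+\alpha}}=(2+\alpha)\left(\frac{n+1}{n}\right)^{1+\alpha}\frac1n\to0 .$$

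There is essentially no obstacle. The only point needing care is getting the directions of the integral comparison right, which monotonicity of $t^{1+\alpha}$ settles. If one preferred to avoid integrals, the Stolz--Cesàro theorem applied to $n^{2+\alpha}/S_n$ would also work. It reduces the problem to the quotient $\left(n^{2+\alpha}-(n-1)^{2+\alpha}\right)/n^{1+\alpha}$, and by the mean value theorem this equals $(2+\alpha)\xi_n^{1+\alpha}/n^{1+\alpha}$ for some $\xi_n\in(n-1,n)$, which tends to $2+\alpha$. The first limit then follows from the second, since
$$\frac{(n+1)^{1+\alpha}}{S_n}=\frac{n^{2+\alpha}}{S_n}\cdot\frac{(n+1)^{1+\alpha}}{n^{2+\alpha}}\to(2+\alpha)\cdot0=0 .$$
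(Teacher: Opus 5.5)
Your proof is correct, but your main route differs from the paper's. The paper argues with Stolz's theorem throughout. It uses Lagrange's mean value theorem to show that the difference quotient $\bigl((n+1)^{2+\alpha}-n^{2+\alpha}\bigr)/n^{1+\alpha}$ tends to $2+\alpha$, which yields the second limit via Stolz. It then applies Stolz a second time, separately, to $(n+1)^{1+\alpha}/s_n$, with difference quotient $\bigl((n+1)^{1+\alpha}-n^{1+\alpha}\bigr)/(s_n-s_{n-1})=(1+\frac1n)^{1+\alpha}-1\to0$.

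Your integral comparison instead gives the explicit bounds $\frac{n^{2+\alpha}}{2+\alpha}\leqslant S_n\leqslant\frac{(n+1)^{2+\alpha}-1}{2+\alpha}$, valid for every $n$. Both limits then follow from a single squeeze, and the first needs only the lower bound. This is quantitative (it even gives rates) and avoids Stolz altogether.

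Your fallback argument is essentially the paper's for the second limit, and it is stated more carefully. The paper pairs the increment $(n+1)^{2+\alpha}-n^{2+\alpha}$ with $n^{1+\alpha}$ rather than with $s_{n+1}-s_n=(n+1)^{1+\alpha}$. That index slip is harmless, and your quotient $\bigl(n^{2+\alpha}-(n-1)^{2+\alpha}\bigr)/n^{1+\alpha}$ avoids it. Deducing the first limit as a product with $(n+1)^{1+\alpha}/n^{2+\alpha}\to0$ also saves the paper's second application of Stolz.
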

\begin{proof}
Let $s_n=\sum\limits^{n}_{i=1}i^{\alpha+1}$ then, by Lagrange’s theorem we have \\
$$(n+1)^{2+\alpha}-n^{2+\alpha}=z_n^{1+\alpha}(2+\alpha),\text{  where  }z_n\in[n;n+1], \text{ hence }$$
$$\dfrac{(n+1)^{2+\alpha}-n^{2+\alpha}}{n^{1+\alpha}}=(2+\alpha)\left(\dfrac{z_n}{n}\right)^{1+\alpha}\to2+\alpha.$$
Since $\dfrac{(n+1)^{1+\alpha}-n^{1+\alpha}}{s_{n}-s_{n-1}}=\left(1+\dfrac{1}{n}\right)^{1+\alpha}-1\to0$ for $n\to\infty$then by Stolz’s theorem we have $\lim\limits_{n\to\infty}\dfrac{(n+1)^{1+\alpha}}{s_n}=0$.
\end{proof}

\begin{lemma}
The asymptotic mean $r(f(x))$  of digits $f(x)$ equals $1$ for all $x\in H$.
\end{lemma}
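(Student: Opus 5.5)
The plan is to show directly that the running averages $r_N(f(x))=\frac1N\sum_{k=1}^N\gamma_k$ of the digits $\gamma_k$ of $y=f(x)$ tend to $1$. The key observation is that in every group of three blocks (the $n$th group, of lengths $[\tau_{0n}n^x]$, $[\tau_{1n}n^x]$, $[\tau_{2n}n^x]$) the ideal weighted mean equals $1$ in both cases. Indeed, if $\beta_n=0$ then $\tau_{1n}+2\tau_{2n}=0,2+0,8=1$, and if $\beta_n=1$ then $0,4+0,6=1$. Moreover, $\tau_{0n}+\tau_{1n}+\tau_{2n}=1$. So each group is ``balanced'' up to the rounding errors coming from the integer parts, and the question is only whether these errors and the incomplete last group are negligible relative to $N$.

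First, fix $x\in H$ and note that $0<x<1$; the value $x=0$ is excluded because $\Delta^3_{(0)}$ is not normal. Write $L_n=[\tau_{0n}n^x]+[\tau_{1n}n^x]+[\tau_{2n}n^x]$ for the length of the $n$th group and $D_n=[\tau_{1n}n^x]+2[\tau_{2n}n^x]$ for its digit sum. From $z-1<[z]\leqslant z$ and the identities above we get $|D_n-L_n|\leqslant 2$ and $n^x-3<L_n\leqslant n^x$.

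Next, for large $N$ choose $m=m(N)$ with $L_1+\ldots+L_m\leqslant N<L_1+\ldots+L_{m+1}$; clearly $m\to\infty$ as $N\to\infty$. Split $\sum_{k\leqslant N}\gamma_k-N$ into two parts. The complete groups contribute $\sum_{n\leqslant m}(D_n-L_n)$, which is at most $2m$ in absolute value. The partial $(m+1)$st group contributes at most $2L_{m+1}\leqslant 2(m+1)^x$ in absolute value, since each of its digits deviates from $1$ by at most $1$. Hence
$$|r_N(f(x))-1|\leqslant\frac{2m+2(m+1)^x}{\sum_{n=1}^m n^x-3m}.$$

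The main step is to show that this bound tends to $0$. The preceding lemma is stated only for exponents $1+\alpha$ with $\alpha>0$, whereas here $0<x<1$. So rather than invoking it I will compare with an integral: $\sum_{n=1}^m n^x\geqslant\int_0^m t^x\,dt=\frac{m^{1+x}}{1+x}$. The denominator is therefore of order $m^{1+x}$, while the numerator is $O(m)$ because $x<1$. The ratio is thus $O(m^{-x})\to0$; alternatively, the same conclusion follows from Stolz's theorem exactly as in the lemma. This gives $r(f(x))=1$.

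The only delicate points are that $x>0$ is needed so that the $2m$ rounding error is negligible, and that empty blocks, where an integer part equals $0$ for small $n$, do not affect the estimates.
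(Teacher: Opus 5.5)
Your proof is correct and follows essentially the same route as the paper's. Both arguments take the last complete group, use $\tau_{1n}+2\tau_{2n}=\tau_{0n}+\tau_{1n}+\tau_{2n}=1$ so that each group is balanced up to $O(1)$ rounding, and show that the $O(m)$ rounding error and the incomplete last group are negligible against $\sum_{n\le m}n^{x}$; the paper phrases this as a two-sided ratio sandwich and invokes its Stolz-type lemma. Your integral comparison is the appropriate substitute for that lemma, because the definition of $f$ uses block lengths $[\tau_{in}n^{x}]$ with $0<x<1$, while the paper's proof tacitly switches to $j^{x+1}$ so that its lemma for exponents $1+\alpha$ applies.
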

\begin{proof}
For any $n\in N$ there exists a natural number $k_n\in N$ such that
$$\sum\limits^{k_n}_{j=1}\sum\limits^{2}_{i=0}[\tau_{ij}j^{x+1}]\leqslant n< \sum\limits^{k_n+1}_{j=1}\sum\limits^{2}_{i=0}[\tau_{ij}j^{x+1}].$$
Then as $n\to \infty$ we have\\
$\begin{array}{ll}
r_n(f(x))&>\dfrac{\sum\limits^{k_n}_{j=1}([\tau_{1j}j^{x+1}]+2[\tau_{2j}j^{x+1}])}{\sum\limits^{k_n+1}_{j=1}\sum\limits^{2}_{i=0}[\tau_{ij}j^{x+1}]}>
\dfrac{\sum\limits^{k_n}_{j=1}(\tau_{1j}+2\tau_{2j})j^{x+1}-k_n}{\sum\limits^{k_n+1}_{j=1}(\tau_{0j}+\tau_{1j}+\tau_{2j})j^{x+1}}=\dfrac{\sum\limits^{k_n}_{j=1}j^{x+1}-k_n}{\sum\limits^{k_n+1}_{j=1}j^{x+1}}=\\
&=1-\dfrac{(k_n+1)^{x+1}}{\sum\limits^{k_n+1}_{j=1}j^{x+1}}-\dfrac{k_n}{\sum\limits^{k_n+1}_{j=1}j^{x+1}}\to1,
\end{array}$ \\
since
$\dfrac{k_n}{\sum\limits^{k_n+1}_{j=1}j^{x+1}}<\dfrac{k_n}{(k_n+1)^{x+1}}\to
0$ and $\dfrac{(k_n+1)^{x+1}}{\sum\limits^{k_n+1}_{j=1}j^{x+1}}=\left(1+\dfrac{1}{k_n}\right)^{x+1}\cdot\dfrac{k_n^{x+1}}{\sum\limits^{k_n+1}_{j=1}j^{x+1}}
\to0$.\\
$\begin{array}{ll}
r_n(f(x))&<\dfrac{\sum\limits^{k_n+1}_{j=1}([\tau_{1j}j^{x+1}]+2[\tau_{2j}j^{x+1}])}{\sum\limits^{k_n}_{j=1}\sum\limits^{2}_{i=0}[\tau_{ij}j^{x+1}]}<
\dfrac{\sum\limits^{k_n+1}_{j=1}(\tau_{1j}+2\tau_{2j})j^{x+1}}{\sum\limits^{k_n}_{j=1}(\tau_{0j}+\tau_{1j}+\tau_{2j})j^{x+1}-k_n}=\\
&=\dfrac{\sum\limits^{k_n+1}_{j=1}j^{x+1}}{\sum\limits^{k_n}_{j=1}j^{x+1}-k_n}=
\dfrac{1+\frac{(k_n+1)^{x+1}}{\sum\limits^{k_n}_{j=1}j^{x+1}}}{1-\frac{k_m}{\sum\limits^{k_n}_{j=1}}}\to1.
\end{array}$\\
Hence, $\lim\limits_{n\to\infty}r_n(f(x))=1$.
\end{proof}

\begin{lemma}
For any number $x\in H$ the corresponding value $f(x)$ does not have well-defined digit frequencies for $i\in\{0,1,2\}$ of all $3$--adic digits.
\end{lemma}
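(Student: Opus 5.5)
The plan is to show that for each $i\in\{0,1,2\}$ the relative frequency $N_i(f(x),n)/n$ has two different limit points, one along the ends of long $\beta$-blocks of $0$'s and one along the ends of long $\beta$-blocks of $1$'s. Fix $x\in H$ and write $p=x+1>1$, since $x>0$ for a normal number. The $j$-th ``cycle'' of $f(x)$ has length $L_j=\sum_{i}[\tau_{ij}j^{p}]=j^p+O(1)$, since $\tau_{0j}+\tau_{1j}+\tau_{2j}=1$. The sequence $(\beta_j)$ consists of alternating runs: zeros on index intervals of length $(2m-1)!$ and ones on intervals of length $(2m)!$. The endpoints of these runs are $M_t=1!+2!+\cdots+t!$, and $M_t=t!(1+o(1))$.

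First I compute the digit count at the end of the $k$-th cycle: $N_i(f(x),S_k)=\sum_{j\le k}\tau_{ij}j^p+O(k)$, where $S_k=\sum_{j\le k}L_j$. By the preceding lemma (Stolz), $S_k\sim k^{p+1}/(p+1)$, so the $O(k)$ error is negligible.

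Next I take $k=M_t$ and split the sum over $j\le M_t$ into the last run $(M_{t-1},M_t]$ and the rest. Since $M_{t-1}\le M_t/t\cdot(1+o(1))$, we get $\sum_{j\le M_{t-1}}j^p=O(M_{t-1}^{p+1})=o(M_t^{p+1})$. So the contribution of the earlier runs is $o(S_{M_t})$. On the last run $\tau_{ij}$ is constant, equal to $\tau_i^{(0)}$ or $\tau_i^{(1)}$ according to the parity of $t$. Hence $N_i(f(x),S_{M_t})/S_{M_t}\to\tau_i^{(0)}$ along odd $t$ and $\to\tau_i^{(1)}$ along even $t$. The triples are $\tau^{(0)}=(0{,}4;0{,}2;0{,}4)$ and $\tau^{(1)}=(0{,}3;0{,}4;0{,}3)$, which differ in every coordinate. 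So for each $i$ the sequence $N_i(f(x),n)/n$ has two distinct subsequential limits, and $\nu_i(f(x))$ does not exist.

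The main obstacle is the tail estimate: checking that the last run dominates, i.e. that $M_{t-1}^{p+1}/M_t^{p+1}\to0$. This follows from $M_{t-1}\le 2(t-1)!$ and $M_t\ge t!$, which give a ratio of at most $(2/t)^{p+1}$. Beyond that, I only need the floor errors to be handled uniformly, as in the proof of the preceding lemma: they contribute at most $3k$ in absolute value, and $3k=o(S_k)$. I also note that the number $f(x)$ so defined never ends in $(2)$, so its digits are exactly the displayed ones.
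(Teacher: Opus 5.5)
Your proposal is correct and follows essentially the same route as the paper: the paper also evaluates $N_i(f(x),\cdot)$ at the ends of the cycles indexed by $k_n=1!+\dots+(2n-1)!$ and $k_n^*=1!+\dots+(2n)!$. It shows that the final $\beta$-run dominates $\sum_{j\le k}j^{x+1}$ and obtains the two distinct limits $\tau_{i1}$ and $\tau_{i2}$. The differences are cosmetic: the paper bounds the opposite-type indices by their count times $k_n^{x+1}$ (using $s_n/k_n\to0$ and the preceding lemma) where you bound the whole earlier prefix by $O(M_{t-1}^{p+1})$, and you treat all three digits at once where the paper does $i=0$ and says ``similarly''.
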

\begin{proof}
Since as $n\to\infty$ we have\\
$$\dfrac{\sum\limits^{n+1}_{k=1}(2k-1)!-\sum\limits^{n}_{k=1}(2k-1)!}{\sum\limits^{2n+1}_{k=1}k!-\sum\limits^{2n-1}_{k=1}k!}=
\dfrac{(2n+1)!}{(2n+1)!+(2n)!}=\dfrac{1}{1+\dfrac{1}{2n+1}}\to 1,$$
then by Stolz’s theorem we obtain
$\dfrac{1!+3!+\ldots+(2n-1)!}{1!+2!+\ldots+(2n-1)!}\to 1$ hence
$\dfrac{2!+\ldots+(2n-2)!}{1!+\ldots+(2n-1)!}\to 0$.

Taking into account that
$\dfrac{\sum\limits^{n+1}_{k=1}(2k-1)!-\sum\limits^{n}_{k=1}(2k-1)!}{\sum\limits^{2n+2}_{k=1}k!-\sum\limits^{2n}_{k=1}k!}=
\dfrac{(2n+1)!}{(2n+2)!+(2n+1)!}=\dfrac{1}{2n+3}\to 0$ as
$n\to\infty$ then
$\dfrac{1!+3!+\ldots+(2n-1)!}{1!+2!+\ldots+(2n)!}\to 0$ hence
$\dfrac{2!+\ldots+(2n-2)!}{1!+\ldots+(2n)!}\to 0$.

Let \\
$$\begin{array}{ll}
&k_n=1!+2!+\ldots+(2n-1)!,\\
&l_n=\sum\limits^{k_n}_{j=1}[\tau_{0j}j^x]+[\tau_{1j}j^x]+[\tau_{2j}j^x],
\end{array}$$ then as $n\to\infty$ we have\\
$\dfrac{N_0(f(x),l_n)}{l_n}=\dfrac{\sum\limits^{k_n}_{j=1}[\tau_{0j}j^{x+1}]}{l_n}>
\dfrac{\sum\limits^{k_n}_{j=1}\tau_{0j}j^{x+1}-3k_n}{\sum\limits^{k_n}_{j=1}j^{x+1}}$. (It is clear that
$\dfrac{k_n}{\sum\limits^{k_n}_{j=1}j^{x+1}}\to 0$).\\
$$\sum\limits^{k_n}_{j=1}\tau_{0j}j^{x+1}=0,4\sum\limits^{h_n}_{j=1}b_{j}^{x+1}+0,3\sum\limits^{s_n}_{j=1}l_{j}^{x+1},$$
where\\
$$\begin{array}{ll}
&\sum\limits^{h_n}_{j=1}b_{j}^{x+1}+\sum\limits^{s_n}_{j=1}l_{j}^{x+1}=\sum\limits^{k_n}_{j=1}j^{x+1},\\
&h_n=1!+3!+\ldots+(2n-1)!,\\
&s_n=2!+4!+\ldots+(2n-2)!.
\end{array}$$
 We then have
$\dfrac{\sum\limits^{s_n}_{j=1}l_{j}^{x+1}}{\sum\limits^{k_n}_{j=1}j^{x+1}}<
 \dfrac{s_n\cdot k_n^{x+1}}{\sum\limits^{k_n}_{j=1}j^{x+1}}=\dfrac{s_n}{k_n}\cdot\dfrac{k_n^{x+2}}{\sum\limits^{k_n}_{j=1}j^{x+1}}\to 0\cdot(2+x)=0$. \\
 Therefore $\lim\limits_{n\to\infty}\dfrac{\sum\limits^{s_n}_{j=1}l_{j}^{x+1}}{\sum\limits^{k_n}_{j=1}j^{x+1}}=0$,
 $\lim\limits_{n\to\infty}\dfrac{\sum\limits^{h_n}_{j=1}b_{j}^{x+1}}{\sum\limits^{k_n}_{j=1}j^{x+1}}=1$,
 $\lim\limits_{n\to\infty}\dfrac{\sum\limits^{k_n}_{j=1}\tau_{0j}j^{x+1}-3k_n}{\sum\limits^{k_n}_{j=1}j^{x+1}}=0,4$.

On the other hand \\
$\dfrac{N_0(f(x),l_n)}{l_n}<\dfrac{\sum\limits^{k_n}_{j=1}\tau_{0j}j^{x+1}}{\sum\limits^{k_n}_{j=1}j^{x+1}+k_n}=
\dfrac{\frac{\sum\limits^{k_n}_{j=1}\tau_{0j}j^{x+1}}{\sum\limits^{k_n}_{j=1}j^{x+1}}}{1+\frac{k_n}{\sum\limits^{k_n}_{j=1}j^{x+1}}}
\to 0,4$ при $n\to \infty$.

Hence, $\lim\limits_{n\to\infty}\dfrac{N_0(f(x),l_n)}{l_n}=0,4$.

Now let
$$\begin{array}{ll}
&k^*_n=1!+2!+\ldots+(2n)!,\\
&l^*_n=\sum\limits^{k^*_n}_{j=1}[\tau_{0j}j^x]+[\tau_{1j}j^x]+[\tau_{2j}j^x],\\
&h^*_n=1!+3!+\ldots+(2n-1)!,\\
&s^*_n=2!+4!+\ldots+(2n)!.
\end{array}$$
We obtain,
$\sum\limits^{k^*_n}_{j=1}\tau_{0j}j^{x+1}=0,4\sum\limits^{h^*_n}_{j=1}b_{j}^{*x+1}+0,3\sum\limits^{s^*_n}_{j=1}l_{j}^{*x+1}$,
where
$\sum\limits^{h^*_n}_{j=1}b_{j}^{*x+1}+\sum\limits^{s^*_n}_{j=1}l_{j}^{*x+1}=\sum\limits^{k^*_n}_{j=1}j^{x+1}$.\\
Thus
$\dfrac{\sum\limits^{h^*_n}_{j=1}b_{j}^{*x+1}}{\sum\limits^{k^*_n}_{j=1}j^{x+1}}<\dfrac{h^*_n}{k^*_n}\cdot\dfrac{k^{*x+2}_n}{\sum\limits^{k^*_n}_{j=1}j^{x+1}}
\to 0\cdot(x+2)=0$ при $n\to\infty$.

Hence, $\lim\limits_{n\to\infty}\dfrac{\sum\limits^{h^*_n}_{j=1}b_{j}^{*x+1}}{\sum\limits^{k^*_n}_{j=1}j^{x+1}}=0$,
$\lim\limits_{n\to\infty}\dfrac{\sum\limits^{s^*_n}_{j=1}l_{j}^{*x+1}}{\sum\limits^{k^*_n}_{j=1}j^{x+1}}=1$.\\
$\dfrac{N_0(f(x),l^*_n)}{l^*_n}>\dfrac{\sum\limits^{k^*_n}_{j=1}\tau_{0j}j^{x+1}-k^*_n}{\sum\limits^{k^*_n}_{j=1}j^{x+1}}\to 0,4\cdot0+0,3\cdot1=0,3$ при $n\to\infty$,\\
$\dfrac{N_0(f(x),l^*_n)}{l^*_n}<\dfrac{\sum\limits^{k^*_n}_{j=1}\tau_{0j}j^{x+1}}{\sum\limits^{k^*_n}_{j=1}j^{x+1}+k^*_n}\to
0,4\cdot0+0,3\cdot1=0,3$ при $n\to\infty$.

Therefore, $\lim\limits_{n\to\infty}\dfrac{N_0(f(x),l^*_n)}{l^*_n}=0,3$ and the frequency $\nu_0(f(x))$ does not exist.

Similarly we can show that
$\lim\limits_{n\to\infty}\dfrac{N_j(f(x),l_n)}{l_n}=\tau_{j1}$ and
$\lim\limits_{n\to\infty}\dfrac{N_j(f(x),l^*_n)}{l^*_n}=\tau_{j2}$,
 and therefore also $\nu_j(f(x))$ does not exist for $j\in\{1,2\}$.
\end{proof}

\end{document}